\title{\LARGE \bf
Optimal Distributed \texorpdfstring{$H_{\infty}$}{H-infinity} State Feedback for Systems \\ with Symmetric and Hurwitz State Matrix
}
\author{Carolina Lidstr\"om and Anders Rantzer% <-this % stops a space
\thanks{This research was supported by the Swedish Research Council through the LCCC Linnaeus Center and by the Swedish Foundation for Strategic Research through the project ICT-Psi. Both authors are with the Department of Automatic Control, Lund University, Box 118, {SE-221 00 Lund}, Sweden. {\tt\small carolina.lidstrom@control.lth.se}  {\tt\small anders.rantzer@control.lth.se} }% <-this % stops a space
}
\begin{document}

\newtheorem{theo}{Theorem}
\setcounter{theo}{0}
\newtheorem{lem}{Lemma}
\setcounter{lem}{0}
\newtheorem{example}{Example}
\setcounter{example}{0}
\newtheorem{remark}{Remark}
\setcounter{example}{0}

\maketitle
\thispagestyle{empty}
\pagestyle{empty}

%%%%%%%%%%%%%%%%%%%%%%%%%%%%%%%%%%%%%%%%%%%%%%%%%%%%%%%%%%%%%%%%%%%%%%%%%%%%%%%%
\begin{abstract}
We address \texorpdfstring{$H_{\infty}$}{H-infinity} structured static state feedback and give a simple form for an optimal control law applicable to linear time invariant systems with symmetric and Hurwitz state matrix. More specifically, the control law as well as the minimal value of the norm can be expressed in the matrices of the system's state space representation, given separate cost on state and control input. Thus, the control law is transparent, easy to synthesize and scalable. Furthermore, if the plant possess a compatible sparsity pattern it is also distributed. Examples of such sparsity patterns are included. Furthermore, we give an extension of the optimal control law that incorporate coordination among subsystems. We demonstrate by a numerical example that the derived optimal controller is equal in performance to an optimal controller derived by the riccati equation approach.
\end{abstract}

%%%%%%%%%%%%%%%%%%%%%%%%%%%%%%%%%%%%%%%%%%%%%%%%%%%%%%%%%%%%%%%%%%%%%%%%%%%%%%%%
\section{Introduction}
Systems with a high density of sensors and actuators often lack centralized information and computing capability. Thus, structural constraints, e.g,  on information exchange among subsystems, have to be incorporated into the design procedure of the control system. However, imposing such constraints may greatly complicate controller synthesis. 

We address \texorpdfstring{$H_{\infty}$}{H-infinity} structured static state feedback, a problem that is recognized as genuinely hard given arbitrary plant and controller structures. However, we give a simple form for an optimal control law applicable to linear time invariant (LTI) systems with symmetric and Hurwitz state matrix. Furthermore, if the system possess a compatible sparsity pattern the proposed controller is distributed. Consider the following LTI system
\begin{equation}
\dot{x} = \underbrace{-\textrm{diag}(1,3,2)}_Ax+\underbrace{\begin{bmatrix}
-1 & 0 &0\\
1 &1 &-1\\
0 &0 &1
\end{bmatrix}}_Bu + w \label{bufferSys}
\end{equation}
where the state $x$, the control input $u$ and the disturbance $w$ are real valued. The static state feedback controllers 
\begin{displaymath}
L_1 =\begin{bmatrix}
1 &-\frac{1}{3} &0\\
0 &-\frac{1}{3} &0\\
0 &\frac{1}{3} &-\frac{1}{2}
\end{bmatrix} \textrm{ and }
L_2  =\begin{bmatrix}
   0.93 & -0.11&
  0.00 \\   -0.05 &
   -0.17 &   -0.01 \\
  0.04 &
  0.16 &
  -0.26
\end{bmatrix}
\end{displaymath}
both minimize the \texorpdfstring{$H_{\infty}$}{H-infinity} norm of the closed-loop system from disturbance $w$ to performance output $\left(x,u\right)$, i.e., when ${u = L_1x}$ and $u = L_2x$, respectively. However, they have different structural properties, e.g., $L_1$ is sparser than $L_2$. Furthermore, the feedback law $u=L_1x$ is distributed as the matrix $L_1$ has the same structure as the sparse matrix $B^T$. This is not the case for controller $L_2$. Controller $L_1$ can be given on the simple form we propose. More specifically $L_1$ can be written as $L_1=B^TA^{-1} $. Controller $L_2$ is derived by the algebraic riccati equation (ARE) approach. That is, iteration over an ARE-constraint until the minimal value of the norm is obtained, see \cite{zhou1996robust} for details. Controllers synthesized by the ARE method are often dense, as is the case for controller $L_2$. Moreover, as the control law we give, i.e., $u= B^TA^{-1}x$, is optimal, it is equal in performance to any centrally derived optimal controller. Additionally, it is transparent in its structure, easy to synthesize and scalable.  

In the 1980's, synthesis of controllers that achieve $H_{\infty}$ norm specifications became a major research area and was formulated in \cite{zames1981feedback}. The solution to the synthesis problem was initially based on operator design but evolved to a state-space based design that paved the way for optimization tools to be used, e.g., see~\cite{doyle1989state}. The \texorpdfstring{$H_{\infty}$}{H-infinity} norm condition can be turned into a linear matrix inequality (LMI) by the Kalman-Yakubovich-Popov lemma~\cite{gahinet1994linear}, see Lemma~\ref{KYP} in Appendix for the version used in this paper. This reformulation made the synthesis computationally easier. As the theory on \texorpdfstring{$H_{\infty}$}{H-infinity} feedback control emerged, a decentralized version took form, e.g., see \cite{zhai2001decentralized}. Imposing general sparsity constraints on the controller might complicate the design procedure. However, design is simplified if the constrained set of controllers $K$ is quadratically invariant with respect to the given system \cite{rotkowitz2006characterization}. It is also simplified if the closed-loop system is constrained to be internally positive~\cite{tanaka2011bounded}. However, in contrast to our method, the methods previously mentioned do not result in controllers that are equal in performance to the central non-structured controller of the system. 

The optimal control law $u=B^TA^{-1}x$ only requires some relatively inexpensive matrix calculations for its synthesis, especially for sparse systems. This is in relation to general \texorpdfstring{$H_{\infty}$}{H-infinity} controller synthesis where more expensive computational methods are required. Additionally, its structure is transparent, which is not often the case in \texorpdfstring{$H_{\infty}$}{H-infinity} controller synthesis.

The \texorpdfstring{$H_{\infty}$}{H-infinity} framework treats worst-case disturbance as opposed to stochastic disturbance in the $H_2$ framework. However, the transparent structure and simple synthesis of the derived optimal feedback law might motivate its use even when some characteristics of the disturbance are known. Moreover, it can be extended to incorporate coordination in a system of heterogeneous subsystems, given a linear coordination constraint. The coordinated control law is a superposition of a decentralized and a centralized part, where the latter is equal for all agents. This structure might be well suited for distributed control purposes as well. See~\cite{madjidian2014distributed} for a similar problem treated in the $H_2$ framework.

The outline of this paper is as follows. This section is ended with some notation. In Section II, the main results is stated and proved. Section III treats system sparsity patterns that result in a distributed control law. Section IV gives an extension of the control law that incorporates coordination. In Section V, the performance of our optimal control law is compared, by a numerical example, to an optimal controller synthesized by the ARE approach. Concluding remarks are given in Section VI.   

The set of real numbers is denoted $\mathbb{R}$ and the space $n$-by-$m$ real-valued matrices is denoted $\mathbb{R}^{n \times m}$. The identity matrix is written as $I$ when its size is clear from context, otherwise $I_n$ to denote it is of size $n$-by-$n$.  Similarly, a column vector of all ones is written $\mathbf{1}$ if its length is clear form context, otherwise $\mathbf{1}_n$ to denote it is of length $n$. 

For a matrix $M$, the inequality $M \geq 0$ means that $M$ is entry-wise non-negative and $M \in \mathbb{R}^{n \times n}$ is said to be Hurwitz if all eigenvalues have negative real part. The matrix $M$ is said to be Metzler if its off-diagonal entries are non-negative and the spectral norm of $M$ is denoted $\|M\|$. Furthermore, for a square symmetric matrix $M$, $M \prec 0$ ($M \preceq 0$) means that $M$ is negative (semi)definite while $M \succ 0$ ($M \succeq 0$) means $M$ is positive (semi)definite.  

The $H_{\infty}$ norm of a transfer function $F(s)$ is written as $\|F(s)\|_{\infty}$. It is well known that this operator norm equals the induced 2-norm, that is
\begin{displaymath}
\|F\|_{\infty} = \textrm{sup}_{v \neq 0} \frac{\|Fv\|_2}{\|v\|_2}.
\end{displaymath}  

\section{An optimal \texorpdfstring{$H_{\infty}$}{H-infinity} state feedback law}
Consider a LTI system of the following structure
\begin{equation}
\dot{x} = Ax+Bu+w \quad \label{G}
\end{equation} 
where state matrix $A \in \mathbb{R}^{n \times n}$ is symmetric and Hurwitz and state $x \in \mathbb{R}^{n}$ can be measured. Moreover, control input ${u \in \mathbb{R}^m}$, disturbance $w \in \mathbb{R}^n$ and matrix ${B \in \mathbb{R}^{n \times m}}$. 

Given (\ref{G}) and performance output $\left(x,\,u\right)$, consider a stabilizing static state feedback law $u \coloneqq Lx$, where $L \in \mathbb{R}^{m \times n}$. The transfer function of the closed-loop system, i.e., from disturbance $w$ to performance output $\left(x,\,u\right)$, is given by
\begin{equation}
G_L(s) = \begin{bmatrix}
I \\L
\end{bmatrix} (sI-(A+BL))^{-1}. \label{Gtransfer}
\end{equation} 
For (\ref{G}) with $A$ symmetric and Hurwitz, an optimal \texorpdfstring{$H_{\infty}$}{H-infinity} static state feedback controller $L$, i.e., a matrix $L$ such that $\| G_L  \| _{\infty}$ is minimized, can be given explicitly in the matrices $A$ and $B$. This is the main result of this paper and it is stated in the following theorem, followed by a proof.

\begin{theo} \label{theo1}
Consider the system (\ref{G}) with $A$ symmetric and Hurwitz. Then, the norm $\| G_L\| _{\infty}$ is minimized by the static state feedback controller $L_* = B^TA^{-1}$. The minimal value of the norm is $\sqrt{\|(A^2 + B^TB)^{-1} \|}$.
\end{theo}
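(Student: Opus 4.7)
My plan is to prove the formula in two halves: an achievability argument for $L_* = B^T A^{-1}$ and a matching lower bound obtained at the frequency $s=0$. Both halves hinge on the identity $A+BL_* = (A^2+BB^T)A^{-1}$, which recasts the closed-loop matrix as $MA^{-1}$ with $M := A^2+BB^T \succ 0$.

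For achievability, this factorization gives $(sI-A-BL_*)^{-1} = A(sA-M)^{-1}$, and since $L_* A = B^T$ the closed-loop transfer function simplifies to $G_{L_*}(s) = \begin{bmatrix} A \\ B^T\end{bmatrix}(sA-M)^{-1}$. Using $\begin{bmatrix}A & B\end{bmatrix}\begin{bmatrix}A\\B^T\end{bmatrix} = M$, the frequency-response Gramian becomes
\[
G_{L_*}(i\omega)^*G_{L_*}(i\omega) \;=\; (i\omega A - M)^{-*}\, M\, (i\omega A - M)^{-1}.
\]
The key step is to invert this Hermitian quantity: since $A,M$ are real symmetric,
\[
(i\omega A - M)\, M^{-1}\, (-i\omega A - M) \;=\; M + \omega^2 A\, M^{-1} A \;\succeq\; M,
\]
so $G_{L_*}(i\omega)^*G_{L_*}(i\omega) \preceq M^{-1}$ for every $\omega$, with equality at $\omega = 0$. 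Hence $\|G_{L_*}\|_\infty = \sqrt{\|M^{-1}\|}$ as claimed.

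For the matching lower bound, I would bound $\|G_L\|_\infty \geq \|G_L(0)\|$ for any stabilizing $L$. Writing $\hat C := \begin{bmatrix}I\\L\end{bmatrix}$ so that $A+BL = \begin{bmatrix}A & B\end{bmatrix}\hat C$, a short computation gives
\[
\|G_L(0)\|^2 \;=\; \frac{1}{\lambda_{\min}\!\bigl(\begin{bmatrix}A & B\end{bmatrix}\,\Pi\, \begin{bmatrix}A & B\end{bmatrix}^T\bigr)},
\]
where $\Pi := \hat C(\hat C^T\hat C)^{-1}\hat C^T$ is the orthogonal projector onto $\operatorname{range}(\hat C)$. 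Since $\Pi \preceq I$, the denominator is at most $\lambda_{\min}\!\bigl(\begin{bmatrix}A & B\end{bmatrix}\begin{bmatrix}A & B\end{bmatrix}^T\bigr) = \lambda_{\min}(M)$, yielding $\|G_L\|_\infty \geq \sqrt{\|M^{-1}\|}$. Combined with achievability this proves the theorem.

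The main obstacle I anticipate is spotting the two manipulations that make the argument short: the factorization $A+BL_* = MA^{-1}$, which turns the frequency response into a Gramian against $M$, and the identity $M+\omega^2 A M^{-1}A$ for the inverse of that Gramian, which reduces the sup over $\omega$ to inspection at $\omega = 0$. One cosmetic note: the expression $A^2+B^TB$ in the statement should read $A^2+BB^T$ for dimensional consistency with $A$ being $n{\times}n$ and $B$ being $n{\times}m$; my argument uses the latter.
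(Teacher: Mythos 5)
Your proof is correct, but it takes a genuinely different route from the paper. The paper runs the standard LMI machinery: KYP lemma to convert the norm bound into a matrix inequality in $P$, the congruence/change of variables $(P^{-1},LP^{-1})\to(X,Y)$, Schur complement and completion of squares to reach $(X+A)^2+(Y+B^T)^T(Y+B^T)-A^2-BB^T+\gamma^{-2}I\prec 0$, and then the choice $X=-A$, $Y=-B^T$; the exact value of $\|G_{L_*}\|_\infty$ is then pinned down by a contradiction argument. You instead give a direct frequency-domain computation. Your achievability half, via $A+BL_*=MA^{-1}$ and the identity $(i\omega A-M)M^{-1}(-i\omega A-M)=M+\omega^2AM^{-1}A\succeq M$, shows something the paper's proof does not make visible: the closed-loop gain is maximized at $\omega=0$, and equals $M^{-1}$ there. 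Your lower bound, $\|G_L\|_\infty\ge\|G_L(0)\|$ combined with the projection estimate $N\Pi N^T\preceq NN^T=M$ for $N=[A\;\;B]$, is arguably cleaner than the paper's closing contradiction step, and it isolates exactly where symmetry of $A$ enters ($NN^T=A^2+BB^T$). What the paper's approach buys in exchange is that the LMI chain extends mechanically to the weighted performance output of Remark 1, whereas your argument would need the weighted analogues of both identities. Two small points: you should add the one-line check that $L_*$ is stabilizing (e.g., $MA^{-1}$ is similar to $M^{1/2}A^{-1}M^{1/2}\prec 0$, hence Hurwitz), since otherwise $\sup_\omega\|G_{L_*}(i\omega)\|$ need not equal the $H_\infty$ norm; and you are right that the $A^2+B^TB$ in the theorem statement is a typo for $A^2+BB^T$ --- the paper's own proof derives the latter.
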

\begin{proof}
Given $\gamma>0$, the following statements are equivalent. 
\begin{enumerate}[(i)]
\item There exists a stabilizing controller $L$ such that 
\begin{displaymath}
\| G_L \|_{\infty} = \left \| \begin{bmatrix}
I \\ L
\end{bmatrix} (i \omega -A-BL)^{-1} \right \|_{\infty} < \gamma. %\label{norm}
\end{displaymath} \label{th1}
\item \label{th2}There exists a matrix $P \succ 0$ such that
\begin{displaymath} 
\begin{bmatrix}
(A+BL)^TP+P(A+BL) & P & [I \enskip L^T] \\ P & -\gamma^2I &0 \\
[I \enskip L^T]^T & 0 & -I \\ 
\end{bmatrix} \prec 0. \\ %\label{KYPLMI}
\end{displaymath}
\item \label{th3}There exist matrices $X\succ 0$ and $Y$ such that 
\begin{displaymath}
\begin{bmatrix}
AX+XA+BY+Y^TB^T & I & [X \enskip Y^T] \\ I & -\gamma^2I &0 \\
[X \enskip Y^T]^T & 0 & -I \\ 
\end{bmatrix} \prec 0. %\label{KYPLMI2}
\end{displaymath} 
\item \label{th4}There exist matrices $X\succ 0$ and $Y$ such that 
\begin{align*}
\left(X+A\right)^2+ \left(Y+B^T\right)^T&\left(Y+B^T\right) \\ &-A^2 -BB^T + \gamma^{-2}I \prec 0. %\label{schur}
\end{align*}
\item \label{th5} \begin{displaymath}
-A^2 -BB^T + \gamma^{-2}I \prec 0. %\label{setXY}
\end{displaymath}
\item \label{th6}\begin{displaymath}
\gamma > \sqrt{\|\left(A^2 + BB^T \right)^{-1}\|}. %\label{gammaineq}
\end{displaymath}
\end{enumerate}
The equivalence between (\ref{th1}) and (\ref{th2}) is given by the K-Y-P-lemma, Lemma~\ref{KYP} given in Appendix. Statement (\ref{th2}) can be equivalently written as (\ref{th3}) after right- and left-multiplication with $\textrm{diag}(P^{-1},I,I)$ and change of variables ${(P^{-1},LP^{-1}) \rightarrow (X,Y)}$. The equivalence between (\ref{th3}) and (\ref{th4}) is obtained by applying Schur's complement lemma and completion of squares to the inequality in (\ref{th3}). Choosing $X=-A$ and $Y=-B^T$ shows equivalence between (\ref{th4}) and (\ref{th5}). It is possible to choose $X=-A$ as $A$ is symmetric and Hurwitz, i.e., $A \prec 0$. Finally,  notice that $A^2+BB^T\succ 0$ and thus $\left(A^2+BB^T\right)^{-1} \succ 0$. Thus, 
\begin{displaymath}
\textrm{(\ref{th5})} \iff \gamma^2I \succ  \left(A^2+BB^T\right)^{-1}
\iff \textrm{(\ref{th6})}.
\end{displaymath}
Given $X=-A$ and $Y=-B^T$, $\gamma$ is minimized. Thus, ${L_* = YX^{-1} = B^TA^{-1}}$ minimizes the norm in (\ref{th1}) and the minimal value of the norm, i.e,  $\|G_{L_*}\|$, is less than $\gamma$. Now, define $\gamma_* \coloneqq \sqrt{\|\left(A^2+BB^T\right)^{-1}\|}$ and assume that $\|G_{L_*}\|_{\infty} \neq \gamma_*$. Then $\|G_{L_*}\|_{\infty}$ has to be strictly larger than or strictly smaller than $\gamma_*$. Consider $\|G_{L_*}\|_{\infty}>\gamma_*$. This statement contradicts statement (\ref{th1}) and (\ref{th6}) and is therefore false. Now, consider instead $\|G_{L_*}\|_{\infty} <\gamma_*$. This statement contradicts that $\gamma$ is minimized and is therefore also false. Hence, the statement $\|G_{L_*}\|_{\infty} \neq \gamma_*$ is false and 
\begin{displaymath}
\|G_{L_*}\|_{\infty} = \sqrt{\|\left(A^2+BB^T\right)^{-1}\|}.
\end{displaymath}
\end{proof}

\begin{remark} 
Instead of performance output $ \left(x, \,u\right)$ consider $ \left(\tilde{x}, \,\tilde{u}\right)$ defined as
\begin{equation}
\begin{bmatrix} \tilde{x} \\ \tilde{u} \end{bmatrix} = \begin{bmatrix}
C & 0 \\ 0 & D
\end{bmatrix}\begin{bmatrix}
x \\ u
\end{bmatrix} \label{scaled}
\end{equation}
with square matrices $C \in \mathbb{R}^{n \times n}$ and $D \in \mathbb{R}^{m\times m}$. Define matrices $Q\coloneqq C^TC\succ 0$ and $R \coloneqq D^TD \succ 0$. If $-AQ^{-1}$ is symmetric and positive definite it is still possible to find an explicit expression for an optimal static state feedback controller, specifically $L_* = R^{-1}B^TQA^{-1}$. Hence, other performance objectives than $\left(x,\,u\right)$ can be included in this framework. However, cross terms between $x$ and $u$ are not possible for Theorem~\ref{theo1} to hold. Matrices $Q$ and $R$ can be seen as the cost matrices for state $x$ and control input $u$, respectively, and used as design parameters in the synthesis of the optimal $H_{\infty}$ static state feedback controller $L_*$. The restrictions on $Q$ and $R$, i.e., $Q \succ 0$ and $R \succ 0$, are not significantly more conservative than restrictions given on similar design matrices in static state feedback synthesis by linear quadratic control, see~\cite{athans1971role} for comparison.
\end{remark}

Synthesis of an optimal static state feedback controller $L$ that minimizes the \texorpdfstring{$H_{\infty}$}{H-infinity} norm of (\ref{Gtransfer}) generally requires additional computation beyond what is needed to compute $L_*$ given by Theorem~\ref{theo1}, i.e., some relatively simple matrix calculations. Moreover, optimal controllers generated by other methods than Theorem~\ref{theo1} are rarely as transparent as $L_*$. The transparency simplifies analysis of the structure of the optimal controller as well as enables scalability. This will be exploited in the following section. 

In order for Theorem~\ref{theo1} to be applicable, the system of interest has to have a state space representation with symmetric and Hurwitz state matrix $A$. The symmetry property of $A$ demands that states that affect each other does so with equal rate coefficient. Such representations appear, for instance, in buffer networks and models of temperature dynamics in buildings. We will now give an example of the latter. 

\begin{example}
Consider a building with three rooms as depicted in Fig.~\ref{fig:RoomTemp}. The average temperature $T_i$ in each room $i=$ 1, 2 and 3, around some steady state, is given by the following model
\begin{align}
&\dot{T}_1 = -r_1T_1+r_{12}\left(T_2-T_1\right)+u_1+w_1\nonumber \\ 
&\dot{T}_2 = -r_2T_2+r_{12}\left(T_1-T_2\right)+r_{23}\left(T_3-T_2\right)+u_2+w_2 \label{temp} \\
&\dot{T}_3 = -r_3T_3+r_{23}\left(T_2-T_3\right)+u_3+w_3 \nonumber
\end{align} 
governed by heat balance. The parameters $r_\bullet $ are constant, real-valued and positive. They are the rate coefficients of the system. For instance, $r_{12}$ is the rate coefficient of the heat transfer through the wall between room 1 and 2. Changes in outdoor temperature and disturbances specific for each room, such as a window is opened, are modeled by disturbances $w_i$. The average temperatures can be measured as well as controlled through heating and cooling devices, given by control inputs $u_i$. If (\ref{temp}) is written on form (\ref{G}), it is easy to see that matrix $A$ is symmetric. Thus, Theorem~\ref{theo1} is applicable to (\ref{temp}), assuming that that parameters $r_\bullet$ are such that $A$ is also Hurwitz. Given a disturbance, the feedback law with $L_*$ from Theorem~\ref{theo1} tries to keep the average temperature as close to the steady state as possible while minimizing the cost that comes with heating and cooling.
\end{example} 

\begin{figure}[t] 
\centering
\vspace{0.3cm}
\includegraphics[width=0.5\linewidth]{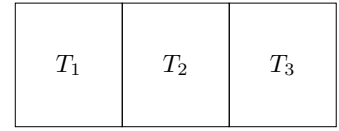}
\caption{Schematic of a building with three rooms. The average temperature in each room $i=1,2$ and $3$ is denoted $T_i$ and given by (\ref{temp}).  \label{fig:RoomTemp}}
\end{figure}

\section{Distributiveness and scalability}
The structure of the optimal controller $L_*$ given in Theorem~\ref{theo1} is clearly dependent on the structure of matrices $A$ and $B$ in (\ref{G}). For instance, if $A$ is diagonal and $B$ is sparse, $L_*$ has the same sparsity pattern as $B^T$. Moreover, controller $L_*$ is distributed if (\ref{G}) possess a compatible sparsity pattern. It is worthwhile to point out that for some sparsity patterns of (\ref{G}) the representation $L_*^{-1}u=x$ instead of $u = L_*x$ might be beneficial for computation of $u$, that is if $B^T$ is invertible. We will begin this section with an example to demonstrate the distributiveness and scalability of the optimal controller given a system of structure (\ref{G}) with compatible sparsity pattern. 
\begin{example}
Consider the following LTI system, containing three subsystems denoted $S_1$, $S_2$ and $S_3$,  
\begin{align}
&S_1: \quad \dot{x}_1 = A_1x_1 + B_1u_1+w_1 \nonumber \\
&S_2: \quad \dot{x}_2 = A_2x_2 + B_2u_1+B_3u_2 +w_2 \label{Garea} \\
&S_3: \quad \dot{x}_3 = A_3x_3 + B_4u_2 +w_3 \nonumber 
\end{align} 
where each subsystem $S_i$, $i=$1, 2 and 3, has finite state dimension $n_i \geq 1$, each control input $u_i$, $i=$1, 2 and 3, is a vector of finite length $m_i \geq 1$ and the matrices are of suitable dimension. Furthermore, matrices $A_1$, $A_2$ and $A_3$ are assumed to be symmetric and Hurwitz. Then,  Theorem~\ref{theo1} is applicable to (\ref{Garea}) and results in the optimal controller
\begin{equation}
L_* = \begin{bmatrix}
B_1^TA_1^{-1} & B_2^TA_2^{-1} & 0 \\ 0 & B_3^TA_2^{-1} & B_4^TA_3^{-1}
\end{bmatrix}. \label{controlLaw}
\end{equation}\label{exampleArea}
\end{example}

Notice that,  if (\ref{Garea}) is written on form (\ref{G}) the optimal controller $L_*$ has the same sparsity pattern as $B^T$.  Thus, each control input vector $u_i$ is only constructed by the states it affects in (\ref{Garea}). If we consider each subsystem $S_i$ in (\ref{Garea}) to represent an area of the physical system it models, the optimal controller (\ref{controlLaw}) is distributed according to these areas. See Fig.~\ref{fig:areaSys} for a graphical representation of the system, drawn with solid lines. Each subsystem $S_i$ is depicted by a circular node while each control input $u_i$ is given by a link connecting the subsystems it affects in (\ref{Garea}). Each disturbance $w_i$ is drawn as arrows that points toward the subsystem it affects in (\ref{Garea}). 

\begin{figure}[b] 
\centering
\includegraphics[width=0.65\linewidth]{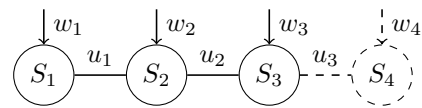}
\caption{Graphical representation of (\ref{Garea}) in solid lines. Additional subsystem $S_4$ and control input $u_3$ in dashed lines.}
\label{fig:areaSys}
\end{figure}

Consider that a fourth subsystem denoted $S_4$, of finite dimension $n_4 \geq 1$, is connected to (\ref{Garea}) via a third control input denoted $u_3$, of finite length $m_3 \geq 1$, as depicted by dashed lines in Fig. \ref{fig:areaSys}. The dynamics of subsystem $S_4$ and the altered dynamics of subsystem $S_3$ are then given by
\begin{align*}
&S_3: \quad \dot{x}_3 = A_3x_3+B_4u_2+B_5u_3+w_3\\
&S_4: \quad \dot{x}_4 = A_4x_4+B_6u_3+w_4
\end{align*}  
where matrix $A_4$ is also assumed to be symmetric and Hurwitz. Then, Theorem~\ref{theo1} is still applicable and the extended optimal controller becomes
\begin{displaymath}
L_* = \begin{bmatrix}
B_1^TA_1^{-1} & B_2^TA_2^{-1} & 0 & 0 \\ 0 & B_3^TA_2^{-1} & B_4^TA_3^{-1} & 0 \\ 0 & 0 & B_5^TA_3^{-1} & B_6^TA_4^{-1}
\end{bmatrix}.
\end{displaymath}
The expansion of the system does not alter the initial control inputs $u_1$ and $u_2$. Thus, for systems with this type of sparsity pattern, the control law $u = L_*x$ is easily scalable. Moreover, the control law is still distributed as the additional control input $u_3$ is only constructed by states $x_3$ and $x_4$.

We will now consider systems of structure (\ref{G}) with diagonal and Hurwitz matrix $A$. Then, the closed-loop system from disturbance $w$ to state $x$ with control law $u=L_*x$, from Theorem~\ref{theo1}, is internally positive by Lemma~\ref{lemIP}, given in Appendix, if and only if $-BB^T$ is Metzler. Consider the closed loop system from disturbance $w$ to output $y \coloneqq x$, with $L_* = B^TA^{-1}$, i.e.,
\begin{align*}
&\dot{x} = (A+BL_*)x+w \\ 
&y = x
\end{align*}
where $A+BL_* = A+BB^TA^{-1}$. In order to fulfil the requirements in Lemma~\ref{lemIP} we only need to check if $A+BL_*$ is Metzler as the other matrices are entry-wise non-negative. If $A$ is diagonal and Hurwitz, i.e, all diagonal elements are negative, it is necessary and sufficient that $-BB^T$ is Metzler for $A+BL_*$ to be Metzler.  

\begin{remark} 
Consider (\ref{scaled}), $A$ diagonal and $-BB^T$ Metzler. Then, diagonal $Q$ and $R$ would suffice in order for the closed-loop system from disturbance $w$ to state $x$ to be internally positive.
\end{remark}

\begin{example}
Consider three buffers of some quantity connected via links with flow $u_1$ and $u_2$ as depicted in Fig.~\ref{fig:buffersys}. The dynamics of the levels in the buffers, around some steady state depicted by the dashed lines in Fig.~\ref{fig:buffersys}, is given by 
\begin{equation}
\begin{bmatrix} \dot{x}_1\\\dot{x}_2 \\ \dot{x}_3\end{bmatrix} = \underbrace{-\textrm{diag}\left(1,2,4\right)}_A\begin{bmatrix} x_1\\x_2 \\ x_3\end{bmatrix} +\underbrace{\begin{bmatrix} -1 & 0  \\ 1 & -1  \\ 0 & 1 \end{bmatrix}}_B\begin{bmatrix} u_1\\u_2\end{bmatrix} +w. \label{waterbuffer}
\end{equation} 
State $x_i$ corresponds to the level in buffer $i=$1, 2 and 3, respectively. Each buffer has some internal dynamics dependent on its own state, as given by matrix $A$. However, with different rate coefficients for the different buffers. We want to construct a control law that minimizes the impact from disturbance $w$  on performance output $\left(x,u\right)$ in the $H_{\infty}$ norm sense. That is, we want to keep the system at its steady state, i.e, $x_i= 0$ for all $i$, while also keeping the cost down, i.e, the magnitude of the control input. 

\begin{figure}[t]
\centering
\includegraphics[width=0.6\linewidth]{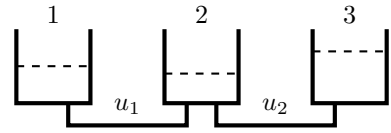}
\caption{Three buffers denoted $1$, $2$ and $3$ connected via links with flow $u_1$ and $u_2$, respectively. The dashed lines represent some steady state of the system. \label{fig:buffersys}}
\end{figure}

Given the matrix $B$ in (\ref{waterbuffer}), $-BB^T$ is Metzler. Thus the closed-loop system with the optimal control law given by Theorem~\ref{theo1}, i.e., 
\begin{displaymath}
L_* = \begin{bmatrix}
1 & -1/2 & 0 \\ 0 &1/2&-1/4  
\end{bmatrix},
\end{displaymath}
is internally positive. This implies that, in closed-loop with controller $L_*$, the states $x_i$ of (\ref{waterbuffer}) will always be non-negative, i.e., the buffer levels will never go below their steady state values, given non-negative disturbance.

To get some further intuition of what controller $L_*$ does consider control input $u_1$. It is given by $u_1= x_1-x_2/2$. Thus, $u_1$ is strictly positive if $x_1>x_2/2$ and the controller $L_*$ redistributes the quantity of buffer 1 and buffer 2 relative to their internal rate coefficients. Again, as in the previous example, $L_*$ has the same sparsity pattern as $B^T$. In this case, with one-dimensional subsystems, it means that each control input only considers local information, i.e., from the buffers it connects. \label{exampleBuff}
\end{example}

The system (\ref{waterbuffer}) can be depicted by a graph, much like the system in the previous example, Example \ref{exampleArea}. However, due to the structure of the matrix $B$ in (\ref{waterbuffer}), the links in this graph could be drawn as directed arrows. In other word, the matrix $B$ in (\ref{waterbuffer}) is the node-link incidence matrix of a directed graph. See~\cite{newman2010networks} for a formal definition of this notion. It is well known that for such matrices $B$,  the matrix product $-BB^T$ is Metzler.

\section{Coordination in the \texorpdfstring{$H_{\infty}$}{H-infinity} framework}
In this section we will extend the optimal control law given by Theorem~\ref{theo1} in order to include coordination. The problem formulation is as follows. Consider a LTI system of $\nu$ subsystems
\begin{equation}
\dot{x}_i = A_ix_i+B_iu_i+w_i, \quad i = 1,\dots, \nu  \label{Gcoord}
\end{equation} 
where $A_i$, for $i = 1,\dots, \nu$, is symmetric and Hurwitz. Furthermore, the control inputs $u_i$ have to coordinate in order to fulfil the following constraint
\begin{equation}
u_1+u_2+\dots+u_{\nu}=0. \label{constraint}
\end{equation}
Given the performance objective $(x,u)$ and the coordination constraint in (\ref{constraint}) we want to construct an optimal \texorpdfstring{$H_{\infty}$}{H-infinity} static state feedback controller for (\ref{Gcoord}). 

Our solution to the given problem is as follows. Rewrite control input $u_1$ in terms of the other control inputs given (\ref{constraint}), i.e., 
\begin{equation}
u_1=-u_2-u_3\,\dots \,-u_{\nu}, \label{u1}
\end{equation}
and define $\tilde{u} = [u_2, \,u_3,\,\dots ,\,u_{\nu}]^T$. Then, 
\begin{displaymath}
u = \underbrace{\begin{bmatrix}
-\mathbf{1}^T_{\nu-1} \\ I_{\nu-1}
\end{bmatrix}}_{D} \tilde{u}
\end{displaymath}
and the overall system of (\ref{Gcoord}) can be written
\begin{displaymath}
\dot{x} = \underbrace{\textrm{diag}(A_1,\dots,A_{\nu})}_Ax+\underbrace{\textrm{diag}(B_1,\dots,B_{\nu})}_BD\tilde{u} +w 
\end{displaymath}
with performance output $(x,u)=(x,D\tilde{u})$. Now, define $$R = D^TD = I+\mathbf{1} \mathbf{1}^T,$$ as described in Remark 1, and notice that $R^{-1}=  I-\frac{1}{\nu}\mathbf{1}\mathbf{1}^T$. The optimal control law by Theorem~\ref{theo1} is then 
\begin{align*}
\tilde{u} &= R^{-1}D^TB^TA^{-1}x  \\&= \left(I_{\nu-1}-\frac{1}{\nu}\mathbf{1}_{\nu-1}\mathbf{1}_{\nu-1}^T \right)\begin{bmatrix}
-\mathbf{1}^T_{\nu-1} \\ I_{\nu-1}
\end{bmatrix}^TB^TA^{-1}x \\ &= \left(\begin{bmatrix}
0& I_{\nu-1}
\end{bmatrix}-\frac{1}{\nu}\mathbf{1}_{\nu-1}\mathbf{1}_{\nu}^T \right)B^TA^{-1}x.
\end{align*}  
Thus, $u_i$ for $i=2,\,\dots,\,\nu$, i.e., the elements in $\tilde{u}$, is 
\begin{equation}
u_i = B_i^TA_i^{-1}x_i-\frac{1}{\nu}\sum_{k=1}^{\nu}B_k^TA_k^{-1}x_k. \label{controllawutilde}
\end{equation}
Now, consider $u_1$ again, 
\begin{align*}
u_1 \stackrel{\textrm{(\ref{u1})}}{=}-\sum_{i=2}^{\nu}u_i \stackrel{\textrm{(\ref{controllawutilde})}}{=} -\sum_{i=2}^{\nu}\left( B_i^TA_i^{-1}x_i-\frac{1}{\nu}\sum_{k=1}^{\nu}B_k^TA_k^{-1}x_k \right)\\
= -\left(\sum_{k=1}^{\nu}B_k^TA_k^{-1}x_k-B_1^TA_1^{-1}x_1-\frac{v-1}{v}\sum_{k=1}^{\nu}B_k^TA_k^{-1}x_k \right) \\ 
= B_1^TA_1^{-1}x_1-\frac{1}{\nu}\sum_{k=1}^{\nu}B_k^TA_k^{-1}x_k,
\end{align*}
i.e., it has the same structure as (\ref{controllawutilde}). Thus, the optimal control law can be written
\begin{equation}
u_i =  B_i^TA_i^{-1}x_i-\frac{1}{\nu}\sum_{k=1}^{\nu}B_k^TA_k^{-1}x_k\label{coordlaw}
\end{equation}
for each subsystem $i=1,\,\dots,\,\nu$ in (\ref{Gcoord}). The first term of $u_i$ in (\ref{coordlaw}) is a local term, only dependent upon the subsystem $i$,  while the second term is dependent on global information of the overall system. However, as this term is equal for all control inputs $u_i$, (\ref{coordlaw}) might still be appropriate for distributed control use. 

In \cite{madjidian2014distributed}, a similar type of problem is considered, however in the $H_2$ framework with stochastic disturbances and the necessity of homogeneous subsystems. The optimal control law derived in \cite{madjidian2014distributed} and the one we suggest in (\ref{coordlaw}) are similar in structure. However, our approach can treat heterogeneous systems in addition to homogeneous ones. On the contrary, it is only applicable to systems with symmetric and Hurwitz state matrix, properties that are not necessary in \cite{madjidian2014distributed}.

\section{Numerical example}
Consider a system of the same structure as (\ref{bufferSys}) given in Section I, i.e., a system
\begin{align}  \label{numEx}
\begin{bmatrix} \dot{x}_1\\\dot{x}_2 \\ \dot{x}_3\end{bmatrix} = &\underbrace{-\begin{bmatrix} a_1 & 0 & 0 \\ 0 & a_2 & 0 \\ 0 & 0 & a_3\end{bmatrix}}_A\begin{bmatrix} x_1\\x_2 \\ x_3\end{bmatrix} \nonumber \\&+\underbrace{\begin{bmatrix} -b_1 & 0 & 0  \\ b_2 &b_3 & -b_4  \\ 0&0 & b_5 \end{bmatrix}}_B\begin{bmatrix} u_{12}\\u_2\\u_{23}\end{bmatrix} + \begin{bmatrix} w_1\\w_2 \\ w_3\end{bmatrix}
\end{align} 
where $a_i>0$, for $i=$1, 2 and 3, and $b_j>0$, for $j = 1, \dots, \,5$, and the performance output is $\left(x,\,u\right)$. We will now compare the optimal controller given by Theorem~\ref{theo1}, i.e., $L_*$, and an optimal controller derived by the ARE-approach, see~\cite{zhou1996robust}, denoted $L_{G}$ for global. In the latter approach, we consider the minimal value of the \texorpdfstring{$H_{\infty}$}{H-infinity} norm of (\ref{Gtransfer}) given by Theorem~\ref{theo1} and iterate over the ARE-constraint until this minimal value is reached. See \cite{MATLAB} for the software used. Controllers $L_1$ and $L_2$ given in Section I are examples of controllers $L_*$ and $L_G$ treated here, respectively. 

Controllers $L_*$ and $L_G$ are optimal and thus they both obtain the minimal value of the $H_{\infty}$ norm of (\ref{Gtransfer}). Now we want to compare how they affect the closed-loop dynamics more in detail. We randomly generate values of the parameters $a_i$ and $b_j$ in $(0.1,5]$ and compare the step-response of the states of (\ref{numEx}) in closed-loop with $L_*$ and $L_G$. In other words, given constant disturbance of value 1. The average dynamics over 50 such randomly generated systems is shown in Fig.~\ref{fig:numEx}. To clarify, we average over the absolute value of the step response in each time instance.  

\begin{figure}[b] 
\centering
\includegraphics[width=0.9\linewidth]{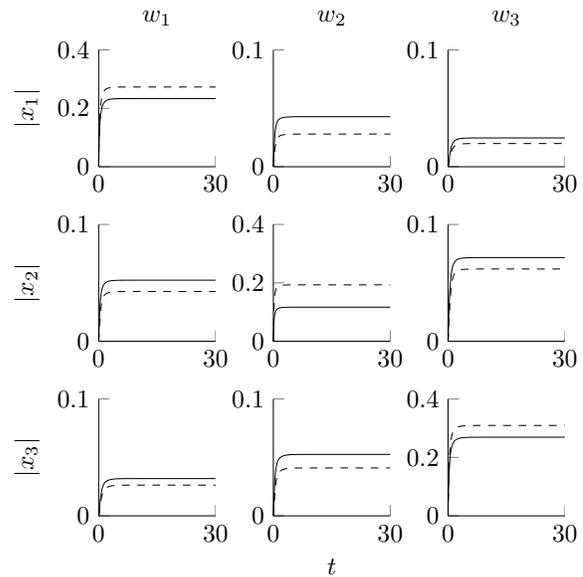}
\caption{Average step response for states $x_1$, $x_2$ and $x_3$ for closed-loop systems with controller $L_*$ (solid lines) and $L_G$ (dashed lines).} \label{fig:numEx}
\end{figure}

\begin{figure}[t] 
\centering
\includegraphics[width=0.6\linewidth]{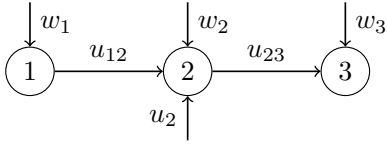}
\caption{Associated graph of~(\ref{numEx}).}
\label{fig:buffersystem}
\end{figure}

The system (\ref{numEx}) can be depicted by the graph given in Fig.~\ref{fig:buffersystem}, as described in Section III. If we compare the step responses shown in Figure~\ref{fig:numEx}, it seems as if controller $L_*$ is better at attenuating local disturbances than $L_G$ is. With local disturbances we mean the disturbance that points towards the state in  Fig.~(\ref{fig:buffersystem}). However, this is at the expense of larger impact on distance. For instance, consider  disturbance $w_1$. Its impacts on state $x_1$ is lower for controller $L_*$ than $L_G$ while its impact on the remaining states is the opposite for the given controllers. However, overall they are comparable in performance.

We will end this numerical example by commenting on controller $L_2$ given in Section I, that is an example of controller $L_G$ treated in this numerical example. Some entries of $L_2$ are small in magnitude compared to the other entries, i.e., entries (2,1), (2,3) and (3,1), where the first number in each parenthesis is the row and the second is the column. However, only entry (3,1) can be replaced with a zero for the controller to still achieve the optimal bound. Furthermore, for systems of much larger dimension than (\ref{bufferSys}), this type of reduction analysis might be difficult. 

\section{CONCLUSIONS}

We give a simple form for an optimal  $H_{\infty}$ static state feedback law applicable to LTI systems with symmetric and Hurwitz state matrix. This simple form is given in the matrices of the system's state space representation which makes it transparent. The structure of the control law also simplifies synthesis and enables scalability, especially given sparse systems. Furthermore, given compatible system sparsity patterns the control law is distributed. The examples we give consider diagonal or block diagonal state matrices and somewhat more general sparsity patterns of the remaining system matrices. These types of system sparsity patterns are common among the systems for which distributed control methods are needed. Furthermore, we extend the optimal control law in order to incorporate coordination among subsystems. The resulting coordinated control law is similar for all subsystems. More specifically, for each subsystem, it is a superposition of a local term and an averaged centralized term where the latter is equal for all subsystems involved in the coordination. In conclusion, our control law is well suited for distributed control purposes. Future research directions include to consider saturation constraints on the optimal control law as such are common in the systems intended for its application. Furthermore, to investigate the existence of an analogous optimal control law given output feedback instead of state feedback.  

% diag

\addtolength{\textheight}{-12cm}   % This command serves to balance the column lengths
                                  % on the last page of the document manually. It shortens
                                  % the textheight of the last page by a suitable amount.
                                  % This command does not take effect until the next page
                                  % so it should come on the page before the last. Make
                                  % sure that you do not shorten the textheight too much.

%%%%%%%%%%%%%%%%%%%%%%%%%%%%%%%%%%%%%%%%%%%%%%%%%%%%%%%%%%%%%%%%%%%%%%%%%%%%%%%%

%%%%%%%%%%%%%%%%%%%%%%%%%%%%%%%%%%%%%%%%%%%%%%%%%%%%%%%%%%%%%%%%%%%%%%%%%%%%%%%%

%%%%%%%%%%%%%%%%%%%%%%%%%%%%%%%%%%%%%%%%%%%%%%%%%%%%%%%%%%%%%%%%%%%%%%%%%%%%%%%%
\section*{APPENDIX}

\begin{lem} \textit{The Kalman-Yakubovich-Popov lemma} \\
Given $A \in \mathbb{R}^{n \times n}$, $B \in \mathbb{R}^{n \times m}$, $M=M^T \in \mathbb{R}^{(n+m)\times(n+m)}$, with $\textrm{det}(j\omega I-A) \neq 0$ and $(A,B)$ controllable, the following two statements are equivalent:\\
(i)
\begin{displaymath}
\begin{bmatrix}
\left(j \omega I-A \right)^{-1}B \\ I 
\end{bmatrix}^*M \begin{bmatrix}
\left( j \omega I-A \right)^{-1} B \\ I 
\end{bmatrix} \preceq 0
\end{displaymath}
$\forall \omega \in \mathbb{R}\cup\{\infty \} $.\\
(ii) There exists a matrix $P \in \mathbb{R}^{n \times n}$ such that $P=P^T$ and 
\begin{displaymath}
M+ \begin{bmatrix}
A^TP + PA & PB \\ B^TP & 0
\end{bmatrix} \preceq 0 
\end{displaymath}
The corresponding equivalence for strict inequalities holds even if $(A,B)$ is not controllable. \label{KYP}
\end{lem}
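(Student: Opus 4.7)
The plan is to establish the two implications separately. The direction (ii)$\Rightarrow$(i) is a routine algebraic check: pre- and post-multiply the matrix inequality in (ii) by $\begin{bmatrix}((j\omega I-A)^{-1}B)^* & I\end{bmatrix}$ and its Hermitian transpose. The $M$-block reproduces the quadratic form on the left of (i). Setting $\xi=(j\omega I-A)^{-1}Bv$, the Lyapunov block evaluates to $\xi^*P(A\xi+Bv)+(A\xi+Bv)^*P\xi$, which by the identity $A\xi+Bv=j\omega\xi$ and $P=P^T$ collapses to $j\omega\,\xi^*P\xi - j\omega\,\xi^*P\xi = 0$. Thus (i) is inherited from (ii).

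The substantive direction is (i)$\Rightarrow$(ii). I would recast (ii) as seeking $P=P^T$ with
$$\begin{bmatrix}x\\u\end{bmatrix}^T M\begin{bmatrix}x\\u\end{bmatrix} + 2x^T P(Ax+Bu)\le 0 \quad \text{for all }(x,u)\in\mathbb{R}^{n+m},$$
and construct $P$ as the Hessian of a quadratic value function. Let $\sigma(x,u)$ denote the quadratic form induced by $M$. By Parseval's theorem, (i) is equivalent to $\int_0^\infty \sigma(x(t),u(t))\,dt \le 0$ along every trajectory of $\dot x=Ax+Bu$ with $x(0)=0$ and $u\in L_2$. Define
$$V(x_0) := \sup\Bigl\{-\int_{-\infty}^{0}\sigma(x(t),u(t))\,dt \,:\, x(-\infty)=0,\ x(0)=x_0,\ u\in L_2\Bigr\}.$$
Controllability of $(A,B)$ guarantees that every $x_0$ is reachable from the origin, so $V$ is defined everywhere; combined with the Parseval reformulation of (i) applied to concatenated inputs, it forces $V(x_0)<\infty$. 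Homogeneity of the problem makes $V$ a (symmetric) quadratic form $V(x_0)=x_0^T P x_0$, and the dynamic-programming principle applied to $V$ yields the infinitesimal Bellman inequality that is precisely the LMI in (ii).

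The main obstacle is the finiteness and quadraticity step, in which controllability of $(A,B)$ is used essentially: without it, some directions in $\mathbb{R}^n$ cannot be steered from the origin and $V$ would fail to be defined on all of $\mathbb{R}^n$, or its symmetric-bilinear extension would not be unique. For the strict version of the lemma, which is asserted without the controllability hypothesis, my plan is a standard perturbation argument: replace $B$ by $[B,\ \varepsilon I]$ so that the augmented pair is controllable, invoke the non-strict result to obtain $P_\varepsilon=P_\varepsilon^T$ solving the LMI for the perturbed data, and let $\varepsilon\to 0$. The strict gap supplied by (i) provides the uniform margin needed to extract a convergent subsequence $P_\varepsilon\to P$ whose limit satisfies the strict LMI in (ii) for the original $(A,B)$.
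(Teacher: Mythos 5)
The paper does not prove this lemma at all; its ``proof'' is a pointer to \cite{rantzer1996kalman}, whose argument is a finite-dimensional convexity one: it shows that the convex hull of the set of Hermitian rank-one matrices $\bigl[\xi^* \; v^*\bigr]^*\bigl[\xi^* \; v^*\bigr]$ with $j\omega\xi=A\xi+Bv$ coincides with a certain explicitly described convex cone, and then obtains $P$ from a separating-hyperplane argument. Your plan is the other classical route, the Willems--Yakubovich dissipativity proof: the easy direction by substitution (your computation $\xi^*P(j\omega\xi)+(j\omega\xi)^*P\xi=0$ is correct, and the $\omega=\infty$ case is covered by the lower-right block of the LMI), and the hard direction by constructing $P$ as the Hessian of the required supply and invoking the Bellman inequality. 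This is a legitimate and genuinely different approach; what the convexity proof buys is that it handles the strict, non-controllable case directly and avoids infinite-dimensional optimization, while your route gives the storage-function interpretation and generalizes to dissipativity theory.

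Two steps in your sketch are doing more work than you acknowledge. First, ``homogeneity of the problem makes $V$ a quadratic form'' is not a one-line deduction: degree-two homogeneity alone does not imply that $V$ is quadratic. You need the standard (but nontrivial) argument that the optimal value of a quadratic functional over an affine family of $L_2$ trajectories satisfies the parallelogram identity, or an appeal to Yakubovich's results on quadratic functionals; as written this is a gap. Second, in the perturbation argument for the strict case, the extraction of a convergent subsequence $P_\varepsilon\to P$ presupposes that the family $\{P_\varepsilon\}$ is bounded, and the ``uniform margin'' from the strict inequality in (i) does not by itself give that bound --- you would need a separate a priori estimate on $P_\varepsilon$ (for instance via uniform bounds on the associated value functions), or you should instead handle the uncontrollable subspace by a Kalman decomposition. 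Neither issue is fatal, but both need to be filled in before the plan is a proof.
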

\begin{proof}
See ~\cite{rantzer1996kalman}.
\end{proof}
\textit{Remark.} If the upper left corner of $M$ is positive semidefinite, it follows from (\ref{KYP}) and Hurwitz stability of $A$ that $P \succeq 0$ ~\cite{rantzer1996kalman}.

\begin{lem} 
The LTI system 
\begin{align*}
\dot{x} = Ax + Bv\\
y = Cx + Dv
\end{align*}
is internally positive if and only if 
\begin{enumerate}
\item[i] $A$ is Metzler, and 
\item[ii] $B \geq 0$, $C \geq 0$ and $D \geq 0$.
\end{enumerate} \label{lemIP}
\end{lem}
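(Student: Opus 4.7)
The plan is to prove the two directions separately using the variation-of-constants formula $x(t) = e^{At}x(0) + \int_0^t e^{A(t-\tau)}Bv(\tau)\,d\tau$ together with the standard fact that $e^{At}$ is entry-wise non-negative for every $t \geq 0$ if and only if $A$ is Metzler. The definition of internal positivity I would work with is that $x(0) \geq 0$ and $v(t) \geq 0$ for all $t \geq 0$ imply $x(t) \geq 0$ and $y(t) \geq 0$ for all $t \geq 0$.

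For sufficiency, I would first establish that $A$ Metzler implies $e^{At} \geq 0$. The trick is to pick a scalar $\sigma \geq 0$ large enough that $A + \sigma I \geq 0$ entry-wise, which is possible precisely because only the diagonal of a Metzler matrix can be negative. Then $e^{At} = e^{-\sigma t} e^{(A+\sigma I)t}$, and $e^{(A+\sigma I)t}$ is non-negative since its Taylor series has only non-negative terms. With $e^{At} \geq 0$, $B \geq 0$, and $v(\tau) \geq 0$, the variation-of-constants formula yields $x(t) \geq 0$; then $y(t) = Cx(t) + Dv(t) \geq 0$ follows from $C, D \geq 0$.

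For necessity, I would test the system with carefully chosen inputs and initial conditions, extracting one entry-wise inequality at a time. Setting $x(0) = 0$ and $v(t) \equiv e_j$, evaluating $y(0^+) = De_j \geq 0$ forces column $j$ of $D$ to be non-negative, so $D \geq 0$. Next, with $x(0) = e_i$ and $v \equiv 0$, the output at $t = 0$ is $y(0) = Ce_i \geq 0$, so $C \geq 0$. For $B \geq 0$, take $x(0) = 0$ and $v \equiv e_j$ again; a Taylor expansion gives $x(t) = tBe_j + O(t^2)$, so $x(t) \geq 0$ for small $t > 0$ forces $Be_j \geq 0$. Finally, for the Metzler property, set $v \equiv 0$ and $x(0) = e_j$; the trajectory is $x(t) = e^{At}e_j$, and for any off-diagonal index $i \neq j$, its $i$-th component expands as $\delta_{ij} + t A_{ij} + O(t^2) = t A_{ij} + O(t^2)$. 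Requiring this to remain non-negative for small $t > 0$ forces $A_{ij} \geq 0$.

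The main obstacle is the necessity argument for $A$ being Metzler. One has to argue rigorously that a strictly negative right-derivative at a zero starting value would immediately violate non-negativity, and rule out pathological cancellation from higher-order terms. The Taylor expansion above handles this cleanly, but one must be explicit that the $O(t^2)$ remainder cannot dominate a strictly negative linear term on a right neighborhood of zero. The sufficiency side is essentially routine once the matrix-exponential non-negativity characterization is in hand.
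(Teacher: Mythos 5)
Your proof is correct. Note, however, that the paper does not actually prove this lemma at all: its ``proof'' is a one-line citation to Kaczorek's monograph on positive systems, so there is no in-paper argument to compare against. What you have written is the standard self-contained characterization proof from the positive-systems literature (essentially the one the cited reference contains): sufficiency via the decomposition $e^{At}=e^{-\sigma t}e^{(A+\sigma I)t}$ with $\sigma$ large enough that $A+\sigma I\geq 0$, which makes the exponential series term-wise non-negative, followed by variation of constants; necessity by probing with initial states $e_j$ and constant inputs $e_j$ and reading off the sign constraints from the zeroth- and first-order Taylor coefficients of $x(t)$ and $y(t)$. Your handling of the one genuinely delicate point --- that for $i\neq j$ the component $x_i(t)=tA_{ij}+O(t^2)$ starting from zero cannot stay non-negative on a right neighborhood of $0$ if $A_{ij}<0$, since the remainder is $o(t)$ and cannot rescue a strictly negative linear term --- is exactly right and is the step most often waved at informally. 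The only payoff of the paper's approach is brevity; yours buys a complete, checkable argument at the cost of half a page.
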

\begin{proof}
See \cite{kaczorek2001externally}.
\end{proof}

%\section*{ACKNOWLEDGMENT}

%%%%%%%%%%%%%%%%%%%%%%%%%%%%%%%%%%%%%%%%%%%%%%%%%%%%%%%%%%%%%%%%%%%%%%%%%%%%%%%%
\bibliographystyle{IEEEtran}
%\IEEEtriggeratref{18}
\bibliography{IEEEabrv,ref}

\begin{thebibliography}{10}
\providecommand{\url}[1]{#1}
\csname url@rmstyle\endcsname
\providecommand{\newblock}{\relax}
\providecommand{\bibinfo}[2]{#2}
\providecommand\BIBentrySTDinterwordspacing{\spaceskip=0pt\relax}
\providecommand\BIBentryALTinterwordstretchfactor{4}
\providecommand\BIBentryALTinterwordspacing{\spaceskip=\fontdimen2\font plus
\BIBentryALTinterwordstretchfactor\fontdimen3\font minus
  \fontdimen4\font\relax}
\providecommand\BIBforeignlanguage[2]{{%
\expandafter\ifx\csname l@#1\endcsname\relax
\typeout{** WARNING: IEEEtran.bst: No hyphenation pattern has been}%
\typeout{** loaded for the language `#1'. Using the pattern for}%
\typeout{** the default language instead.}%
\else
\language=\csname l@#1\endcsname
\fi
#2}}

\bibitem{zhou1996robust}
K.~Zhou, J.~C. Doyle, K.~Glover, \emph{et~al.}, \emph{Robust and optimal
  control}.\hskip 1em plus 0.5em minus 0.4em\relax Prentice hall New Jersey,
  1996, vol.~40.

\bibitem{zames1981feedback}
G.~Zames, ``Feedback and optimal sensitivity: Model reference transformations,
  multiplicative seminorms, and approximate inverses,'' \emph{Automatic
  Control, IEEE Transactions on}, vol.~26, no.~2, pp. 301--320, 1981.

\bibitem{doyle1989state}
J.~C. Doyle, K.~Glover, P.~P. Khargonekar, B.~Francis, \emph{et~al.},
  ``State-space solutions to standard {H}-2 and {H}-infinity control
  problems,'' \emph{Automatic Control, IEEE Transactions on}, vol.~34, no.~8,
  pp. 831--847, 1989.

\bibitem{gahinet1994linear}
P.~Gahinet and P.~Apkarian, ``A linear matrix inequality approach to
  {H}-infinity control,'' \emph{International journal of robust and nonlinear
  control}, vol.~4, no.~4, pp. 421--448, 1994.

\bibitem{zhai2001decentralized}
G.~Zhai, M.~Ikeda, and Y.~Fujisaki, ``Decentralized {H}-infinity controller
  design: a matrix inequality approach using a homotopy method,''
  \emph{Automatica}, vol.~37, no.~4, pp. 565--572, 2001.

\bibitem{rotkowitz2006characterization}
M.~Rotkowitz and S.~Lall, ``A characterization of convex problems in
  decentralized control,'' \emph{Automatic Control, IEEE Transactions on},
  vol.~51, no.~2, pp. 274--286, 2006.

\bibitem{tanaka2011bounded}
T.~Tanaka and C.~Langbort, ``The bounded real lemma for internally positive
  systems and {H}-infinity structured static state feedback,'' \emph{IEEE
  transactions on automatic control}, vol.~56, no.~9, pp. 2218--2223, 2011.

\bibitem{madjidian2014distributed}
D.~Madjidian and L.~Mirkin, ``Distributed control with low-rank coordination,''
  \emph{Control of Network Systems, IEEE Transactions on}, vol.~1, no.~1, pp.
  53--63, 2014.

\bibitem{athans1971role}
M.~Athans, ``The role and use of the stochastic linear-quadratic-gaussian
  problem in control system design,'' \emph{Automatic Control, IEEE
  Transactions on}, vol.~16, no.~6, pp. 529--552, 1971.

\bibitem{newman2010networks}
M.~Newman, \emph{Networks: an introduction}.\hskip 1em plus 0.5em minus
  0.4em\relax Oxford University Press, 2010.

\bibitem{MATLAB}
``{MATLAB} and {C}ontrol {S}ynthesis {T}oolbox {R}elease 2012a, {T}he
  {M}athworks, {I}nc.'' Natick, Massachusetts, United States.

\bibitem{rantzer1996kalman}
A.~Rantzer, ``On the {K}alman-{Y}akubovich-{P}opov lemma,'' \emph{Systems \&
  Control Letters}, vol.~28, no.~1, pp. 7--10, 1996.

\bibitem{kaczorek2001externally}
T.~Kaczorek, ``Externally and internally positive time-varying linear
  systems,'' \emph{APPLIED MATHEMATICS AND COMPUTER SCIENCE}, vol.~11, no.~4,
  pp. 957--964, 2001.

\end{thebibliography}

\end{document}